\newtheorem{thm}{Theorem}
\newtheorem*{cor}{Corollary}
\newtheorem{Cor}{Corollary}
\theoremstyle{definition}
\def\F{{\mathcal F}}
\def\S{{\mathcal S}}
\def\real{{\mathbb R}}
\def\sumprime{\mathop{{\sum}'}}
\def\Prod{\textstyle\prod}
\begin{document}
\title[Multilinear Embedding]
{Multilinear Embedding -- convolution estimates  on smooth  submanifolds}
\author{William Beckner}
\address{Department of Mathematics, The University of Texas at Austin,
1 University Station C1200, Austin TX 78712-0257 USA}
\email{beckner@math.utexas.edu}
\begin{abstract}
Multilinear embedding estimates for the fractional Laplacian are obtained in terms of 
functionals defined over a hyperbolic surface.
Convolution estimates used in the proof enlarge the classical framework of the convolution 
algebra for Riesz potentials to include the critical endpoint index, and provide new 
realizations for   fractional integral  
inequalities that incorporate restriction to smooth submanifolds. 
Results developed here are modeled on the space-time estimate used by Klainerman and 
Machedon in their proof of uniqueness for the Gross-Pitaevskii hierarchy.
\end{abstract}
\maketitle

Analysis of the Gross-Pitaevskii hierarchy has led to the development and 
application of functional analytic mappings  for the rigorous description of many-body
interactions in quantum dynamics. 
Evolving  from this framework is increased understanding for how Sobolev embedding 
and measures of fractional smoothness determine intrinsic size and growth estimates 
for functions and their Fourier transforms. 
Development of multilinear analysis increases understanding for genuinely $n$-dimensional 
aspects of Fourier analysis.
In a formative and influential paper on uniqueness of solutions for 
the Gross-Pitaevskii hierarchy (\cite{KM}), Klainerman and Machedon 
prove a novel space-time estimate where the essential part of the proof corresponds to having 
uniform bounds for a three-dimensional convolution integral taken over a hyperbolic surface.
Their result can be interpreted in the larger context of multilinear embedding where new 
end-point estimates are obtained.
The resulting inequalities can be viewed as a step in the larger and dual program for 
understanding how smoothness controls restriction to a non-linear sub-variety 
(see \cite{Beckner-MRL})

The paradigm that underlies this objective to characterize control by multilinear embedding 
combines aspects of the Hardy-Littlewood-Sobolev inequality, 
the Hausdorff-Young inequality, Sobolev embedding  and the uncertainty principle:
$$\int_{\real^n} |\hat f|^2\,d\xi = \int_{\real^n} |f|^2\,dx 
\le C \bigg[ \int_{\real^n} \Big| (-\Delta/4\pi^2)^{\alpha/2} f\Big|^p \,dx\bigg]^{2/p}$$
where $\alpha = n (1/p - 1/2) \ge 0$ and $1<p\le 2$. 
Our objective   is to obtain multilinear embedding forms that extend this inequality and 
express capability for fractional smoothness to control restriction on a smooth submanifold.
Results in this direction have already been given in \cite{Beckner-MRL}, and initially 
reflect ideas of Calder\'on and Stein. 
The possibility that arises from  the Klainerman-Machedon space-time estimate may be 
implicitly suggested by Stein's   observation that ``surface restriction'' for the 
Fourier transform can sharpen estimates that use fractional integral arguments 
(see page 28 in \cite{F}; pages 352--353, 374 in \cite{Stein93}): 
that is, surface integrals can be used as an auxiliary mechanism 
to characterize embedding action by Riesz potentials:
\begin{equation}\label{eq:Riesz-embedding}
\int_S |x_1 +\cdots + x_m|^\sigma \Big|\F \big[ \Prod |x_k|^{-\beta_k} * f\big] (x_1,\ldots, x_m) 
\Big|^r\,d\nu\ .
\end{equation}
This is not directly ``restriction phenomena'' but rather a novel domain decomposition 
that results from adding new variables and submanifold restriction occurs for interior 
potential calculations used to define embedding forms.
Still estimates of this kind have been used for restriction-related arguments 
(see page 204 in \cite{KM96}). 
The choice of a hyperbolic surface for this functional reflects both application-driven 
problems (free Schr\"odinger equation, Coulomb forces) and geometric  invariance 
(conformal group,  indefinite orthogonal group). 
The existence of distinguished directions for the surface will place limits on the range of 
multilinear embeddings that are considered and make the selection of uniform potentials 
and parameter constraints such as $m=n$  more intrinsic. 
This strategy will reinforce the underlying purpose for the paradigm: 
``{\em symmetry determines structure}''. 
\bigskip
\subsection*{Outline of argument:}$\quad$
\bigskip

\noindent
Consider $m$ copies of $\real^n$ and let $f$ be in the Schwartz class $\S (\real^{mn})$.
Define the Fourier transform 
$$(\F f)(\xi) = \hat f(\xi) = \int e^{2\pi i\xi x} f(x)\,dx\ ,$$
and observe that on $\real^n$ with $0<\lambda <n$
$$\F : |x|^{-\lambda}\quad \longrightarrow\quad \pi^{-n/2 +\lambda} 
\left[ \frac{\Gamma (\frac{n-\lambda}2)}{\Gamma (\frac{\lambda}2)}\right] 
|x|^{-n+\lambda}\ .$$
For   $\Delta_k =$ standard Laplacian on $\real^n$ in the variable $x_k$, 
$\{\alpha\} = (\alpha_1,\alpha_2,\ldots,\alpha_m)$, 
$0<\alpha_k <n$, $\alpha = \sum \alpha_k$ for $k=1$ to $m$, 
$1<p$, $1< p_* \le 2$ and define 
\begin{equation}\label{eq:outline}
\begin{split}
&\Lambda_{p_*}  (f;\{\alpha\}/ p)   
= \int_{\real^n\times\cdots\times\real^n} 
\Big|  \Prod\limits_{k=1}^m 
(-\Delta_k/4\pi^2)^{\alpha_k/2p} f\Big|^{p_*}\, 
dx_1\ldots dx_m\\
\noalign{\vskip6pt}
&= \int_{\real^n\times\cdots\times\real^n} 
\Big| \int e^{-2\pi ix\xi} \Prod\limits_{k=1}^m |\xi_k|^{\alpha_k/p} \ \hat f  d\xi_1\ldots d\xi_m\Big|^{p_*}
dx_1 \ldots dx_m\ .
\end{split}
\end{equation}
For $w\in \real^n$, $\tau>0$ define 
$$d\nu = \delta \Big( \tau + \sumprime |x_k|^2 - |x_m|^2\Big) \delta \Big(w-\sum x_k\Big)\, 
dx_1 \ldots dx_m$$
(here the prime on the symbol for sum, product or sequence indicates that the 
last term should be dropped);
then for $\sigma >0$, $r\ge 1$, $1<p<\infty$, $1/p + 1/q = 1$, $rq \ge 2$, 
$1/p_* + 1/(rq) =1$  and $\beta_k = n-\alpha_k/(pr)$
\begin{equation} \label{eq:A}
\begin{split}
&A \int \bigg[ \int |x_1 + \cdots + x_m|^{\sigma/p} \Big|\F\Big[ \Prod |x_k|^{-\beta_k} * f\Big] 
(x_1,\ldots,x_m)\Big|^r\,d\nu \bigg]^q\,dw\,d\tau\\
\noalign{\vskip6pt}
&\qquad
 = \int \bigg[ \int |x_1 + \cdots + x_m|^{\sigma/p} \Prod |x_k|^{-\alpha_k/p} |\hat f|^r\,d\nu
\bigg]^q\,dw\, d\tau\\
\noalign{\vskip6pt}
&\qquad
\le \sup_{w,\tau} \bigg[ \int |x_1 + \cdots + x_m|^\sigma \Prod |x_k|^{-\alpha_k} \,d\nu\bigg]^{q/p}
\int \bigg[ \int |\hat f|^{rq} \,d\nu\bigg]\, dw\,d\tau\\
\noalign{\vskip6pt}
&\qquad
\le C \int |\hat f|^{rq} \,dx \le C\bigg[ \int  |f|^{p_*}\,dx\bigg]^{rq/p_*}\ .
\end{split}
\end{equation}
This result can be rephrased in terms of multilinear embedding: 
\begin{equation}\label{eq:multilinear-embed}
\bigg[\int \bigg[ \int |x_1 +\cdots + x_m|^{\sigma/p}  |\hat f|^r\,d\nu\bigg]^q \,dw\,d\tau
\bigg]^{p_*/(rq)} 
\le \Lambda_{p_*}  \Big(f; \{\alpha\}/( pr)\Big)\ .
\end{equation}
Observe that for $p_* =2$ and $pr \ge2$ (equivalently $p\ge 2)$, the range of values 
for $\alpha = \sum \alpha_k$ can be lowered from the limit used in \cite{Beckner-MRL}. 
The essential step for this argument is to determine the range of parameter values for which 
\begin{equation}\label{eq3}
\sup_{w,\tau} \int |x_1 +\cdots  + x_m|^\sigma \Prod |x|^{-\alpha_k}\,d\nu
\end{equation}
will be bounded.

Such results will extend the classical convolution for Riesz potentials 
$$\int_S \frac1{|w-y|^\lambda}\ \frac1{|y|^\mu}\, dv $$
where $S$ is a smooth submanifold in $\real^n$, $w\in \real^d$ and the objective 
is to bound the size of the integral by an inverse power of $|w|$ under suitable conditions 
on $\lambda$ and $\mu$. 
And in turn, these bounded convolution forms can serve as kernels to define new 
Stein-Weiss fractional integrals that characterize control by smoothness.
Formula~\eqref{eq3} is equivalent to 
$$\sup_{w,\tau} |w|^\sigma \int \delta \Big( \tau + \sumprime |x_k|^2 - |x_m|^2\Big) 
\delta \Big( w -\sum x_k\Big)  \Prod |x_k|^{-\alpha_k} \, dx_1 \ldots dx_m\ .$$
In the case where $\sigma = 2 + \alpha -n (m-1)$, this form has an invariance 
which makes it a function of only one variable:
\begin{equation}\label{eq:function}
\sup_w |w|^\sigma \int \delta \Big( 1+\sumprime |x_k|^2 - |x_m|^2\Big) 
\delta \Big( w-\sum x_k\Big) \Prod |x_k|^{-\alpha_k}\,dx_1 \ldots dx_m\ .
\end{equation}
Now for the case $\alpha = n (m-1)$, this form becomes an extension of the classical 
convolution form 
$$(g * f_1 * \cdots * f_m) (w)\ ,\qquad g\in L^1 (\real^n)\ ,\qquad f_k \in L^{n/\alpha_k} 
(\real^n)$$
which is uniformly continuous and in the class $C_o (\real^n)$ by using the Riemann-Lebesgue 
lemma for convolution. 
Here the convolution for Lebesgue classes is replaced by Riesz potentials, but the 
multivariable integration is constrained to be on a hyperbolic surface invariant under the action
of the indefinite orthogonal group.  

Effectively one obtains a new embedding kernel from formula~\eqref{eq:function} 
\begin{equation}\label{eq:new-kernel}
\int \delta \Big( 1+\sumprime |x_k|^2 - |x_m|^2\Big) \delta \Big( w-\sum x_k\Big) 
\Prod |x_k|^{-\alpha_k}\, dx_1 \ldots dx_m 
\le \frac{C}{|w|^2}
\end{equation} 
which defines a map from $L^{n/(n-1)} (\real^n)$ to $L^n (\real^n)$. 
Any type of efficient proof for estimates of this type will require a reduction argument for 
the number of convolutions which in turn will place constraints on the range of possible values 
of parameters. 
Though at first glance seemingly specialized, the case of uniform potentials with $m=n$ 
captures essential features of this problem, including the 3-dimensional model 
calculation by Klainerman and Machedon, with small allowance for variation. 
In part, this dominant feature reflects the conversion of the required multivariable estimate 
to basically an 
$n$-dimensional calculation in terms of vector lengths:
$$\Prod |x_k|^{-(n-1)} \,dx_1 \ldots dx_n = d|x_1|\ldots d|x_n|\, d\sigma$$
where $d\sigma$ denotes surface measure on $n$ copies of $S^{n-1}$. 
Moreover, such a choice allows a natural reduction to low dimension when the number 
of convolutions  is at least three. 
The strategy for developing this argument rests on three cornerstones:
1)~reduction of number of convolutions in the estimate;
2)~direct integration methods with focus on utilization of polar angle integration to obtain 
sharper dependence on the ``free variable'' $|w|$;
3)~explicit calculations in two dimensions. 
Notice that the estimates \eqref{eq:function}  for the surface integrals do not depend on the 
embedding indices for Lebesgue classes. 

\begin{thm}\label{thm:alpha}
Let $0<\alpha_k <n$, $\alpha = \sum \alpha_k$, $m\ge 3$, $n\ge 3$ and
$\rho = 2+\alpha - (m-1)n$ for $0<\rho <n$. 
For $m>\ell\ge2$ assume that there are $\ell $ choices of the $\alpha_k$'s having the 
property that $\ell n -2>\beta_{\ell,m} > (\ell-1) n-2$ with $\beta_{\ell,m} = \sum\alpha_k$
for $k= m-\ell+1$ to $m$.
Then for $\sigma = 2+\beta_{\ell,m} - (\ell-1)n$
\begin{equation}\label{eq:alpha} 
\begin{split}
&\sup_w |w|^\rho \int \delta \Big( 1+\sumprime |x_k|^2 - |x_m|^2\Big) 
\delta \Big( w-\sum x_k\Big) \Prod |x_k|^{-\alpha_k}\,dx_1 \ldots dx_m\\
\noalign{\vskip6pt}
&\le C \sup_w |w|^\sigma \int \delta \Big( 1+\sumprime |x_k|^2 - |x_m|^2\Big)
\delta \Big( w-\sum x_k\Big) \Prod |x_k|^{-\alpha_k} \, dx_{m-\ell+1} \ldots dx_m
\end{split}
\end{equation}
where the sums and products in the second term are taken over the indices 
$k= m-\ell+1$ to $m$.
\end{thm}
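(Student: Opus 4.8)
The plan is to carry out the ``reduction of the number of convolutions'' directly on the kernel: integrate away the $m-\ell$ non-selected variables and recognize what is left as an $\ell$-fold hyperbolic form of exactly the same type, with the constant $1$ replaced by $1+t$, where $t$ records the squared lengths already spent. By the symmetry of $d\nu$ under permutations of $x_1,\dots,x_{m-1}$ we may assume the selected indices are $m-\ell+1,\dots,m$ (the distinguished index $m$ is among them, as the statement requires). First I would insert the identities $1=\int_0^\infty\delta\big(t-\sum_{k=1}^{m-\ell}|x_k|^2\big)\,dt$ and $1=\int_{\real^n}\delta\big(y-\sum_{k=1}^{m-\ell}x_k\big)\,dy$ into the left-hand side integral of \eqref{eq:alpha}; since everything is nonnegative, Tonelli rewrites it as
\begin{equation*}
\int_0^\infty\!\!\int_{\real^n} G(t,y)\,J_{1+t}(w-y)\,dy\,dt ,
\end{equation*}
where $G(t,y)=\int\delta\big(t-\sum_{k=1}^{m-\ell}|x_k|^2\big)\,\delta\big(y-\sum_{k=1}^{m-\ell}x_k\big)\,\prod_{k=1}^{m-\ell}|x_k|^{-\alpha_k}\,dx$ is the $(m-\ell)$-fold piece and $J_\tau(z)=\int\delta\big(\tau+\sum_{k=m-\ell+1}^{m-1}|x_k|^2-|x_m|^2\big)\,\delta\big(z-\sum_{k=m-\ell+1}^{m}x_k\big)\,\prod_{k=m-\ell+1}^{m}|x_k|^{-\alpha_k}\,dx$ is the $\ell$-fold hyperbolic form, which for $\tau=1$ is precisely the surface integral on the right of \eqref{eq:alpha}.

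Second, I would record the scaling law for $J_\tau$. Substituting $\sqrt\tau\,x_k$ for each of the $\ell$ variables and collecting powers of $\tau$ gives, with $\beta:=\beta_{\ell,m}$ and $\sigma=2+\beta-(\ell-1)n$, the identity $J_\tau(z)=\tau^{-\sigma/2}\,J_1(z/\sqrt\tau)$. Writing $M:=\sup_z|z|^\sigma J_1(z)$ for the quantity on the right of \eqref{eq:alpha} (which we may assume finite, otherwise there is nothing to prove), this yields the $\tau$-free bound $J_\tau(z)\le M|z|^{-\sigma}$, hence $J_{1+t}(w-y)\le M|w-y|^{-\sigma}$ for all $t\ge0$. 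Carrying this into the double integral and integrating in $t$ first collapses $G$ to the ordinary convolution $\overline G(y):=\int_0^\infty G(t,y)\,dt=\big(|x|^{-\alpha_1}*\cdots*|x|^{-\alpha_{m-\ell}}\big)(y)$, leaving the left side of \eqref{eq:alpha} bounded by $M\int_{\real^n}\overline G(y)\,|w-y|^{-\sigma}\,dy$.

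Third comes the arithmetic of the indices, which is where the pieces of the hypothesis are consumed. From $\rho>0$ together with $\beta<\ell n-2$ one gets $\gamma:=\sum_{k=1}^{m-\ell}\alpha_k=\alpha-\beta>(m-\ell-1)n$, while $\gamma<(m-\ell)n$ is automatic since each $\alpha_k<n$; in this range the iterated Riesz convolution composes inside its algebra, so $\overline G(y)=c_1|y|^{-\mu}$ with $\mu=\gamma-(m-\ell-1)n\in(0,n)$ (indeed $\mu=\rho+(n-\sigma)$). Since also $0<\sigma<n$ (from $(\ell-1)n-2<\beta<\ell n-2$) and $0<\rho<n$, a final application of the Riesz composition formula gives $\int_{\real^n}|y|^{-\mu}\,|w-y|^{-\sigma}\,dy=c_2|w|^{-(\mu+\sigma-n)}$, and the decisive cancellation is $\mu+\sigma-n=\rho$. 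Hence $|w|^\rho$ times the left side of \eqref{eq:alpha} is at most $c_1c_2M$, which is exactly \eqref{eq:alpha}.

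The $\delta$-function manipulations are made rigorous by the usual mollification together with Tonelli, and the two elementary Riesz convolutions use only the classical composition formula; these I would merely indicate. The genuine obstacle is the index bookkeeping in the third step: one has to check that every convolution performed stays strictly inside the range $0<(\textrm{index})<n$ with no endpoint degeneration — the upper bound $\beta<\ell n-2$ is what keeps $\sigma<n$ and, with $\rho>0$, what forces $\mu>0$ so that the deleted $(m-\ell)$-fold Lebesgue-type convolution does not diverge at infinity, while $\rho<n$ is what legitimizes the final composition. The one place calling for a little care is the order in which the $m-\ell$ deleted potentials are composed pairwise, so that no partial index leaves $(0,n)$; it is cleanest to bypass this by evaluating $\overline G$ through the Fourier transform $\prod_{k=1}^{m-\ell}\widehat{|x|^{-\alpha_k}}=c\,|\xi|^{-((m-\ell)n-\gamma)}$, which represents a locally integrable tempered function precisely when $\gamma\in\big((m-\ell-1)n,(m-\ell)n\big)$.
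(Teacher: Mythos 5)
Your proposal is correct and follows essentially the same route as the paper: freeze the first $m-\ell$ variables, bound the inner $\ell$-fold hyperbolic surface integral by its supremum (after the shift $w\rightsquigarrow w-\sum_{1}^{m-\ell}x_k$ and $\tau=1+\sum_{1}^{m-\ell}|x_k|^2$), and evaluate the remaining $(m-\ell)$-fold Riesz convolution against $|w-\cdot|^{-\sigma}$ via the Fourier transform. Your version is in fact tighter than the paper's at two points it glosses over — the explicit scaling $J_\tau(z)=\tau^{-\sigma/2}J_1(z/\sqrt\tau)$ that reduces the $\sup_{w,\tau}$ to the $\tau=1$ form appearing in \eqref{eq:alpha}, and the index arithmetic $\mu+\sigma-n=\rho$ confirming that the constant $C$ is finite under the stated hypotheses.
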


\begin{proof} 
\begin{equation*}
\begin{split}
&|w|^\rho \int \delta \Big( 1 +\sumprime |x_k|^2 - |x_m|^2\Big) 
\delta \Big( w-\sum x_k\Big) \Prod |x_k|^{-\alpha_k} \, dx_1 \ldots dx_m\\
\noalign{\vskip6pt}
&\qquad =  |w|^\rho \int {\Prod\limits_1^{m-\ell} }|x_k|^{-\alpha_k} 
\Big| w - \sum_1^{m-\ell} x_k\Big|^{-\sigma} 
\bigg[ \Big| w- \sum_1^{m-\ell} x_k\Big|^\sigma
 \int \delta \Big( 1+\sumprime |x_k|^2  - |x_m|^2\Big) 
 \delta \Big( w-\sum x_k\Big)\ \ \times\\
 &\qquad\qquad {\Prod\limits_{m-\ell+1}^m} |x_k|^{-\alpha_k} \, 
 dx_{m-\ell+1} \ldots dx_m\bigg] \, dx_1 \ldots dx_{m-\ell}\\
 \noalign{\vskip6pt}
&\qquad \le |w|^\rho \int {\Prod\limits_1^{m-\ell} } 
|x_k|^{-\alpha_k} \Big| w-\sum_1^{m-\ell} x_k\Big|^{-\sigma} \,dx_1 \ldots dx_{m-\ell}\ \ \times\\
 \noalign{\vskip6pt}
&\qquad\qquad \sup_{w,\tau} \bigg[ |w|^\sigma \int \delta 
\Big( \tau + \sumprime_{m-\ell+1} |x_k|^2 - |x_m|^2\Big) 
\delta \Big( w - \sum_{m-ell+1} x_k\Big) 
{\Prod\limits_{m-\ell+1} } |x_k|^{-\alpha_k} \, dx_{m-\ell+1} \ldots dx_m\bigg]\\
 \noalign{\vskip6pt}
 &\qquad =  C \sup_{w,\tau} \bigg[ |w|^\sigma \int \delta 
 \Big( \sumprime_{m-\ell+1} |x_k|^2 - |x_k|^2\Big) 
 \delta \Big( w - \sum_{m-\ell+1} x_k\Big) {\Prod\limits_{m-\ell+1}} 
 |x_k|^{-\alpha_k} \, dx_{m-\ell+1} \ldots dx_m\bigg] 
 \end{split}
 \end{equation*}
 where 
 $$C = |w|^\rho \int {\Prod\limits_1^{m-\ell} } 
 |x_k|^{-\alpha_k} \Big| w-\sum_1^{m-\ell} x_k\Big|^{-\sigma} \,dx_1 \ldots dx_{m-\ell}$$
 and can be calculated from the action of the Fourier transform on Riesz potentials.
 For the latter ``sup'' term, the new parameters $w,\tau$ were obtained by setting 
 $\tau = 1+\sum |x_k|^2$ for $1\le k\le m-\ell$ and $w-\sum x_k$  
 $(1\le k\le m-\ell)   \rightsquigarrow w$.  
 It must be recognized that the conditions given in the statement of the theorem may not 
 be sufficient to guarantee that the ``sup'' over $\ell$ convolutions is finite.
 \end{proof}
 
 The concern here will not be to treat the full range of parameters but to especially 
 cover the cases $\ell=2$ and 3, and to establish estimates for uniform potentials 
 with $\alpha_k = n-1$ for $3\le m\le n+1$.

\begin{thm}\label{thm-uniform-bd}
For $n\ge 2$,  $\sigma = \alpha +\lambda +2 -n$, $2(n-1)>\alpha +\lambda >n-1$
and $0<\alpha < n-1$, $\lambda >0$
\begin{equation}\label{eq:uniform-bd}
\Theta_n  (w) 
= |w|^\sigma \int_{\real^n\times\real^n} 
\delta \left[ 1 + |x|^2 - |y|^2\right]
\delta (w-x-y) |x|^{-\alpha} |y|^{-\lambda} \,dx\,dy 
\end{equation}
is uniformly bounded for $w\in \real^n$.
\end{thm}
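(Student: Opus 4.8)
Since the surviving factor at $w=0$ is $\delta(1)$, one has $\Theta_n(0)=0$ and may assume $w\neq0$. The plan is then to collapse the two Dirac masses, reducing $\Theta_n(w)$ to a one-dimensional integral, and to exploit a hidden scaling invariance of that integral. First I would integrate out $y$ against $\delta(w-x-y)$, putting $y=w-x$, so that
$$\Theta_n(w)=|w|^\sigma\int_{\real^n}\delta\big(1+|x|^2-|w-x|^2\big)\,|x|^{-\alpha}\,|w-x|^{-\lambda}\,dx.$$
The decisive observation is that $1+|x|^2-|w-x|^2=1-|w|^2+2\,w\cdot x$ is \emph{affine} in $x$: the hyperbolic constraint collapses to the hyperplane $H_w=\{x:\ w\cdot x=(|w|^2-1)/2\}$. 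Writing $x=se+x'$ with $e=w/|w|$ and $x'\perp e$, the remaining delta fixes $s=s_0:=(|w|^2-1)/(2|w|)$ and contributes a Jacobian $1/(2|w|)$, while $|x|^2=s_0^2+|x'|^2$ and $|w-x|^2=t_0^2+|x'|^2$ with $t_0:=(|w|^2+1)/(2|w|)\ge 1$ (so that $t_0^2-s_0^2=1$). Polar coordinates $r=|x'|$ in $\real^{n-1}$ then give, $\omega_{n-2}$ being the surface measure of $S^{n-2}$,
$$\Theta_n(w)=\tfrac{1}{2}\,\omega_{n-2}\,|w|^{\sigma-1}\int_0^\infty\big(s_0^2+r^2\big)^{-\alpha/2}\big(t_0^2+r^2\big)^{-\lambda/2}\,r^{n-2}\,dr.$$

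Next I would rescale $r=t_0u$ and extract the powers of $t_0$. Since $\sigma-1=\alpha+\lambda-(n-1)$, this concentrates all the $w$-dependence into an explicit prefactor times a profile depending only on $\rho:=s_0/t_0=(|w|^2-1)/(|w|^2+1)\in(-1,1)$:
$$\Theta_n(w)=\tfrac{1}{2}\,\omega_{n-2}\Big(\tfrac{|w|}{t_0}\Big)^{\alpha+\lambda-(n-1)}J(\rho),\qquad J(\rho)=\int_0^\infty\big(u^2+\rho^2\big)^{-\alpha/2}\big(1+u^2\big)^{-\lambda/2}\,u^{n-2}\,du.$$
Because $|w|/t_0=2|w|^2/(|w|^2+1)\in(0,2)$ and the exponent $\alpha+\lambda-(n-1)$ is positive, the prefactor is at most $2^{\alpha+\lambda-(n-1)}$ uniformly in $w$. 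For the profile, $J$ depends on $\rho$ only through $\rho^2$ and $(u^2+\rho^2)^{-\alpha/2}\le u^{-\alpha}$, so $J(\rho)\le J(0)=\int_0^\infty u^{n-2-\alpha}(1+u^2)^{-\lambda/2}\,du$; the latter converges at $u=0$ precisely because $\alpha<n-1$ and at $u=\infty$ precisely because $\alpha+\lambda>n-1$. Hence $\Theta_n(w)\le\tfrac12\,\omega_{n-2}\,2^{\alpha+\lambda-(n-1)}\,J(0)<\infty$ uniformly, as asserted. (Equivalently one may track the limits $|w|\to0$ and $|w|\to\infty$, using that $J$ extends continuously to $\rho=\pm1$ with $J(\pm1)=\int_0^\infty(1+u^2)^{-(\alpha+\lambda)/2}u^{n-2}\,du<\infty$.)

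The reduction itself is short, so the one genuinely delicate point is the behaviour of the two singular weights once they are restricted to $H_w$. The factor $|w-x|^{-\lambda}$ never has its singularity on $H_w$, since its component along $e$ equals $t_0\ge1$; thus $\lambda$ enters only through the decay of $J(0)$ at infinity, jointly with $\alpha$, which is why the hypothesis is $\alpha+\lambda>n-1$ and no separate positivity of $\lambda$ need be assumed (it is implied). By contrast $|x|^{-\alpha}$ does meet $H_w$ — exactly when $|w|=1$, where $s_0=0$ and the origin lies on $H_w$ — and this forces the sharp $(n-1)$-dimensional integrability condition $0<\alpha<n-1$; this is the crux of the estimate. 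Finally it is worth recording that the stated upper bound $\alpha+\lambda<2(n-1)$ — equivalently $\sigma<n$, and matching the range produced by Theorem~\ref{thm:alpha} with $\ell=2$ — plays no role in the boundedness itself: $\Theta_n$ is uniformly bounded throughout $0<\alpha<n-1$, $\alpha+\lambda>n-1$.
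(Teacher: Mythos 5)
Your proof is correct, and it reaches the theorem by a genuinely different reduction than the paper's. The paper integrates out $x$ (setting $x=w-y$), uses the quadratic delta to replace $|w-y|$ by $(|y|^2-1)^{1/2}$, passes to polar coordinates $(r,u)=(|y|,\cos\theta)$, evaluates the delta in the radial variable, and arrives at the closed form $C_n\big[|w|^2/(1+|w|^2)\big]^{\alpha+\lambda-n+1}\,{}_2F_1\big(\alpha/2,(\alpha+\lambda-n+1)/2;(\alpha+\lambda)/2;\beta(w)\big)$ with $\beta(w)=4|w|^2/(1+|w|^2)^2$; uniform boundedness then follows from Gauss's evaluation of ${}_2F_1$ at $1$, which is exactly where $\alpha<n-1$ and $\alpha+\lambda>n-1$ enter. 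You instead keep $x$, observe that the surviving constraint $1-|w|^2+2w\cdot x=0$ is a hyperplane, split off the $w$-direction, and rescale by $t_0$ to isolate the $w$-dependence into the prefactor $(|w|/t_0)^{\alpha+\lambda-(n-1)}$ times a profile $J(\rho)$ bounded elementarily by $J(0)$. The two computations parametrize the same $(n-1)$-dimensional surface: your prefactor agrees with the paper's up to the constant $2^{\alpha+\lambda-n+1}$ since $|w|/(2t_0)=|w|^2/(1+|w|^2)$, and your inequality $J(\rho)\le J(0)$ plays precisely the role of bounding the hypergeometric function by its value at $1$, while avoiding hypergeometric identities altogether. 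Your closing remarks also match the paper's: the observation that $|w-x|\ge t_0\ge 1$ on the surface (so $\lambda$ needs no upper bound and only $\alpha<n-1$ is a genuine local integrability constraint) is the paper's opening remark that $|y|\ge1$, and the condition $\alpha+\lambda<2(n-1)$, i.e.\ $\sigma<n$, is indeed only relevant for the subsequent embedding application, not for boundedness. What your route gives up is the explicit ${}_2F_1$ formula, which the paper exploits immediately afterward for the two-dimensional elliptic-integral identity; what it buys is a shorter, entirely elementary bound.
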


\begin{proof}
Observe that since $|y| \ge 1$, there is no upper bound for $\lambda$ in this computation; 
but this calculation may be used for embedding where $0<\sigma <n$ so there can be 
an effective upper bound resulting from application. 
\begin{equation}\label{eq:pf-uniform-bd}
\begin{split}
\Theta_{n,\alpha} (w) 
& = |w|^\sigma \int_{\real^n} \delta \left[ 1+ |w-y|^2 - |y|^2\right] |w-y|^{-\alpha} 
|y|^{-\lambda} \,dy \\
\noalign{\vskip6pt}
& = \frac{2\pi^{(n-1)/2}}{\Gamma ((n-1)/2)} |w|^\sigma 
\int_0^1\!\!\int_1^\infty  \delta \left( 1+|w|^2 - 2|w| ru\right) (r^2-1)^{-\alpha/2} 
r^{n-\lambda-1} (1-u^2)^{(n-3)/2} \,dr\,du\\
\noalign{\vskip6pt}
& = \frac{2^{\alpha +\lambda-n}\pi^{(n-1)/2}}{\Gamma ((n-1)/2)} 
\left[ \frac{|w|^2}{1+|w|^2}\right]^{\alpha +\lambda -n+1} 
\int_0^1 u^{(\alpha +\lambda -n+1)/2-1} 
(1-u)^{(n-3)/2} \big( 1-\beta (w)u\big)^{-\alpha/2}\,du\\
\noalign{\vskip6pt}
&= C_n \left[ \frac{|w|^2}{1+|w|^2}\right]^{\alpha +\lambda -n+1} 
F\big( \alpha/2 ,\ (\alpha+\lambda -n+1)/2;\ (\alpha +\lambda)/2;\ \beta(w)\big)
\end{split}
\end{equation}
where $F$ denotes the hypergeometric function, 
$\beta (w) = \frac{4|w|^2}{(1+|w|^2)^2}\le 1$
and 
$$C_n =  2^{\alpha +\lambda -n} \pi^{(n-1)/2} \ 
\frac{\Gamma ((\alpha + \lambda -n+1)/2)}{\Gamma ((\alpha +\lambda)/2)}\ .$$
Then 
\begin{equation}\label{eq:pf-uniform-bd-2}
\theta_n(w) \le 2^{\alpha +\lambda -n} \pi^{(n-1)/2} 
\frac{\Gamma ((\alpha +\lambda -n+1)/2)\Gamma ((n-1-\alpha)/2)}{\Gamma((n-1)/2)
\Gamma (\lambda/2)} 
\left[ \frac{|w|^2}{1+|w|^2}\right]^{\alpha+\lambda -n+1}\ .
\end{equation}
Hence $\Theta_n(w)$ is bounded for $\lambda >0$, $0<\alpha <n-1$ 
and $\alpha +\lambda >n-1$.
\end{proof}

This result demonstrates that at least three convolutions are needed to 
account for uniform potentials with $\alpha_k = n-1$. 
In the case of dimension two, one can explicitly compute that 
\begin{equation*}
\begin{split}
& |w|^2 \int_{\real^n \times\real^2} \delta (1+|x|^2 - |y|^2) 
\delta (w- x-y) |x|^{-1} |y|^{-1}\,dx\,dy\\
\noalign{\vskip6pt}
&\qquad = 
\frac{|w|^2}{1+|w|^2} \int_0^1 u^{-1/2} (1-u)^{-1/2} \big( 1-\beta (w)u\big)^{-1/2} \,du\\
\noalign{\vskip6pt}
&\qquad = 
\frac{2|w|^2}{1+|w|^2} K\Big( \sqrt{\beta (w)}\Big) 
\simeq - \ln \sqrt{ 1-\beta (w)} \ \text{ for }\ \beta (w) \simeq 1
\end{split}
\end{equation*}
where $K$ denotes the complete elliptic integral. 

\begin{thm}\label{thm:delta-function}
For $n\ge2$, $\sigma = 2+\alpha_1 + \alpha_2 +\lambda - 2n$ and 
$3n-2 > \alpha_1 +\alpha_2 +\lambda >2n-2$, 
$\alpha_1 + \lambda > n-1$, $\alpha_2 + \lambda > n-1$, $2n-1 >\alpha_1 + \alpha_2$
\begin{equation}\label{eq:delta-function}
\Delta_n (w) = |w|^\sigma \!\! \int_{\real^n\times\real^n\times\real^n} \mkern-18mu
\delta \left[ 1+ |w|^2 + |z|^2 - |y|^2\right] 
\delta (w-x-y-z) |x|^{-\alpha_1} |z|^{-\alpha_2} |y|^{-\lambda}\,dx\,dy\,dz
\end{equation}
is uniformly bounded for $w\in \real^n$.
\end{thm}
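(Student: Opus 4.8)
\emph{Normalization.} The surface, the weights and the prefactor $|w|^\sigma$ with $\sigma=2+\alpha_1+\alpha_2+\lambda-2n$ are jointly covariant under the dilation $(x,z,y,w,\tau)\mapsto(tx,tz,ty,tw,t^2\tau)$, so $\Delta_n(w)=\Theta(w;1)$ where $\Theta(w;\tau)$ carries a free parameter $\tau$ in the hyperbolic delta and $\Theta(w;\tau)=\Theta(w/\sqrt\tau;1)$; together with rotational symmetry this makes $\Delta_n$ a function of $|w|$ alone, so it is enough to control its behaviour as $|w|\to0^+$, as $|w|\to\infty$ (where the hyperbolic surface degenerates to the cone $|x|^2+|z|^2=|y|^2$), and to rule out interior blow-up — the only candidate being $|w|=1$, where the constant $c(x)=\tfrac12(|w-x|^2-1-|x|^2)$ introduced below can vanish.

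\emph{Reduction route.} Viewing $\Delta_n$ as the $m=3$ instance of \eqref{eq:function} with $\alpha_3=\lambda$, apply Theorem~\ref{thm:alpha} with $\ell=2$, peeling off — using the $\alpha_1\leftrightarrow\alpha_2$ symmetry of the surface — whichever of $|x|^{-\alpha_1}$, $|z|^{-\alpha_2}$ leaves the surviving pair $\{\alpha_i,\lambda\}$ inside the admissible window of Theorem~\ref{thm-uniform-bd}. The detached factor is the Riesz composition $|w|^\sigma\!\int|x|^{-\alpha_i}\,|w-x|^{-(2+\alpha_j+\lambda-n)}\,dx$, which is finite and genuinely independent of $w$ exactly because the choice $\sigma=2+\alpha_1+\alpha_2+\lambda-2n$ cancels the two homogeneity degrees; here one uses $\alpha_1+\alpha_2+\lambda>2n-2$ for convergence at infinity and $\alpha_i<n$, $2+\alpha_j+\lambda-n<n$ at the two Riesz singularities. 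The residue, after restoring $\tau$ and invoking the same dilation that already collapsed $\sup_{w,\tau}$ to $\sup_w$ in \eqref{eq:function}, is precisely $\Theta_n$ of Theorem~\ref{thm-uniform-bd} with data $(\alpha,\lambda)\leftarrow(\alpha_i,\lambda)$ and exponent $\alpha_i+\lambda+2-n$, hence bounded by \eqref{eq:pf-uniform-bd-2}.

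\emph{Direct route, for the remaining parameters.} Eliminate the vector delta by $y=w-x-z$; the surface condition becomes $|w-x-z|^2=1+|x|^2+|z|^2$ — in particular $|y|\ge1$, as in Theorem~\ref{thm-uniform-bd} — and, crucially, $(w-x)\cdot z=\tfrac12\bigl(|w-x|^2-1-|x|^2\bigr)$ is \emph{linear} in $z$. Freeze $x$ and integrate over $z$ in polar form exactly as in \eqref{eq:pf-uniform-bd}: the polar-angle-of-$z$ integration against this linear delta produces the factor $(1-t^2)^{(n-3)/2}$ and a Beta integral, while the radial $|z|$-integration converges at infinity because $\alpha_2+\lambda>n-1$ (the same mechanism that forced $|y|\ge1$). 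One is left with an integral in $|x|$ and in the polar angle between $x$ and $w$; at large $|x|$ the integrand behaves like $|x|^{2n-3-\alpha_1-\alpha_2-\lambda}$, integrable precisely because $\alpha_1+\alpha_2+\lambda>2n-2$, and near the coincidence set $|w|=1$, $x\to0$ (where $z$ is then also forced small) it behaves like $|x|^{2n-2-\alpha_1-\alpha_2}$, integrable precisely because $2n-1>\alpha_1+\alpha_2$; the angular measure $(\sin\theta)^{n-2}\,d\theta$ absorbs the mild singularity $|c(x(\theta))|^{-(\alpha_2-n+1)_+}$ of the inner integral wherever the linear constraint degenerates. The two-sided bound $2n-2<\alpha_1+\alpha_2+\lambda<3n-2$ places $\Delta_n$ in the homogeneity class $0<\sigma<n$ relevant for the embedding applications.

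The step I expect to be the main obstacle is the cross term $x\cdot z$ produced the moment the vector delta is used: it blocks a clean iterated reduction and dictates the order of integration — one must integrate the angle of $z$ first, while the scalar constraint is still linear in $z$, and only then turn to $x$. Equivalently, on the reduction route the delicate point is the parameter bookkeeping: one must check that the stated hypotheses always admit a choice of which potential to peel so that both the Riesz composition converges and the hypotheses of Theorem~\ref{thm-uniform-bd} hold — the contingency explicitly flagged at the close of the proof of Theorem~\ref{thm:alpha}. Everything else is the beta/hypergeometric calculation already rehearsed in \eqref{eq:pf-uniform-bd}.
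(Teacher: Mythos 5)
Your two-route decomposition mirrors the paper's own two-step proof more closely than you may realize: the paper first integrates out the vector delta and one polar angle to produce a hypergeometric factor ${}_2F_1(\alpha_2/2,(\lambda+\alpha_2-n+1)/2;(\lambda+\alpha_2)/2;\beta(x,w))$, and its \emph{Step 1} (the case $\alpha_2<n-1$, $\alpha_2+\lambda>n-1$) bounds that factor by a constant and reduces to exactly the Riesz composition you describe — so your ``reduction route'' is sound in spirit and covers the same easy regime, modulo your swapped $i/j$ bookkeeping and the need to verify $\alpha_i+\lambda<2n-2$ for the detached convolution to converge (a condition not implied by the hypotheses, which is why a second case is unavoidable).

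The genuine gap is in your ``direct route,'' which is the only route available precisely in the hard case (e.g.\ $\alpha_1,\alpha_2\ge n-1$, which the hypotheses permit). There the hypergeometric factor blows up like $(1-\beta(x,w))^{(n-1-\alpha_2)/2}=\bigl|1+|w-x|^2-|x|^2\bigr|^{\,n-1-\alpha_2}$ on the hyperplane $2w\cdot x=1+|w|^2$, and you dispose of this by asserting that the angular measure $(\sin\theta)^{n-2}d\theta$ ``absorbs'' it. That assertion is not a proof and is doubtful where the hyperplane is tangent to a sphere $|x|=\mathrm{const}$ (for $n=2$ the angular density is $1$ and the singularity near tangency behaves like $\theta^{-2(\alpha_2-1)}$, which need not be integrable under the stated hypotheses); more importantly, nothing in your sketch produces the required uniformity in $w$. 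The paper's Step~2 supplies two specific devices you are missing: the interpolation bound $\bigl(1-\beta u\bigr)^{-\alpha_2/2}\le (1-\beta)^{-\gamma}(1-u)^{-(\alpha_2-2\gamma)/2}$ with $\gamma$ chosen in the window $\min\{n-\alpha_1,\ \alpha_1+\alpha_2+\lambda-2(n-1),\ 1\}>2\gamma>\max\{1-\alpha_2,\ \alpha_2-(n-1)\}$ — whose nonemptiness is where the hypothesis $2n-1>\alpha_1+\alpha_2$ is actually used, not in the small-$|x|$ integrability you attribute it to — followed by a one-dimensional equimeasurable rearrangement in the coordinate $x_1$ along $w$ and a change of variables that strips out the $|w|$-dependence, converting the hyperplane singularity into the harmless factor $|x_1|^{-2\gamma}$ with $2\gamma<1$. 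Without these two moves (or a substitute for them) the hard case of the theorem is not established.
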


\begin{proof} 
Using translations and applying the two delta functions
\begin{equation*}
\begin{split} 
\Delta_n (w) & = |w|^\sigma \int_{\real^n\times\real^n\times\real^n} \mkern-22mu
\delta \left[ 1+ |w|^2 + |z|^2 - |y|^2\right] 
\delta (x+y+z) |x+w|^{-\alpha_1} |z|^{-\alpha_2} |y|^{-\lambda} \,dx\,dy\,dz\\
\noalign{\vskip6pt}
& = |w|^\sigma \int_{\real^n\times\real^n} \mkern-22mu
\delta \left[ 1+ |x-w|^2 + |x-y|^2 - |y|^2\right] |x-w|^{-\alpha_1} |x-y|^{-\alpha_2} \, dx\,dy\,dz\\
\noalign{\vskip6pt}
& = |w|^\sigma \int_{\real^n\times\real^n} \mkern-22mu
\delta \left[ 1+ |x-w|^2 + |x|^2 - 2x\cdot y\right] |x-w|^{-\alpha_1} \big| |y|^2 - 1 - |x-w|^2
\big|^{-\alpha_2/2} |y|^{-\lambda}\,dx\,dy\\
\noalign{\vskip6pt}
& =     
 s_n |w|^\sigma
\int_{\real^n} \! \int_0^1 \!\! \int_1^\infty \mkern-12mu 
\delta \left[ 1+ |x-w|^2 + |x|^2 - 2|x| ur\right] |x-w|^{-\alpha_1} 
\big|r^2 - 1 - |x-w|^2\big|^{-\alpha_2/2} \ \ \times\\
\noalign{\vskip6pt}
&\qquad\hskip2.5truein
r^{n-\lambda-1} (1-u^2)^{(n-3)/2} \,dr\,du\,dx\\
\noalign{\vskip6pt}
& = 	 
s_n |w|^\sigma
\int_{\real^n} \! \int_0^1   
(2|x|u)^{\lambda -n+\alpha_2} 
\left[ 1+ |x-w|^2 + |x|^2\right]^{n-\lambda -1-\alpha_2} |x-w|^{-\alpha_1} \ \ \times\\
\noalign{\vskip6pt}
&\qquad\hskip1.5truein
\left[ 1- \frac{4|x|^2 (1+|x-w|^2)u^2}{(1+|x-w|^2 + |x|^2)^2} \right]^{-\alpha_2/2} 
(1-u^2)^{(n-3)/2}\,du\,dx\\
\noalign{\vskip6pt}
& = 	 
2^{\lambda -n + \alpha_2 -1} s_n |w|^\sigma 
\int_{\real^n} |x|^{\lambda +\alpha_2 -n} |x-w|^{-\alpha_1} 
\left[ 1+ |x-w|^2 + |x|^2\right]^{n-1-\lambda -\alpha_2}\ \ \times\\
\noalign{\vskip6pt}
&\qquad\hskip1truein 
\bigg[ \int_0^1 u^{(\lambda +\alpha_2 - n+1)/2-1} 
(1-u)^{(n-3)/2} \big( 1-\beta (x,w)u\big)^{-\alpha_2/2} \,du \bigg]\,dx
\end{split}
\end{equation*}
where  $s_n = 2\pi^{(n-1)/2}/\Gamma ((n-1)/2)$ 
$$\beta (x,w) = \frac{4|x|^2 (1+|w-x|^2)}{(1+ |w-x|^2 + |x|^2)^2}\ ,\qquad 
1-\beta (x,w) = \left[ \frac{1+|w-x|^2 - |x|^2}{1+ |w-x|^2 + |x|^2}\right]^2\ .$$
Then 
\begin{equation*}
\begin{split}
\Delta_n (w) & = 2^{\lambda + \alpha_2 -n-1} 
s_n \frac{\Gamma ((\lambda +\alpha_2 -n+1)/2)}{\Gamma((\lambda+\alpha_2)/2)} 
|w|^\sigma\ \ \times\\
\noalign{\vskip6pt}
&\qquad \int_{\real^n} |x|^{\lambda +\alpha_2 -n} |x-w|^{-\alpha_1} 
\Big[ 1+ |x-w|^2 + |x|^2\Big]^{n-1-\lambda - \alpha_2}\ \ \times\\
\noalign{\vskip6pt}
&\qquad\qquad {}_2F_1 \Big( \alpha_2/2,\ (\lambda+\alpha_2 -n+1)/2\ ;\ 
(\lambda +\alpha_2)/2\ ;\ \beta (x,w)\Big)\,dx\ .
\end{split}
\end{equation*}
\medskip

\noindent {\em Step 1}: 
The original calculation here is symmetric in $\alpha_1$ and $\alpha_2$. 
Suppose that for one of these values, say $\alpha_2$, that $\lambda + \alpha_2 >n-1$ 
and $\alpha_2 < n-1$.
The integral in $u$ above is bounded by 
$$\frac{\Gamma ((\lambda +\alpha_2 - n+1)/2) \Gamma ((n-1-\alpha_2)/2)}
{\Gamma (\lambda/2)}\ ;$$
then to see that $\Delta_2$ is uniformly bounded in $w$, it suffices to show that 
$$|w|^\sigma \int_{\real^n} |x|^{\lambda -n +\alpha_2} |x-w|^{-\alpha_1} 
\left[ 1+ |x-w|^2 + |x|^2\right]^{n-\lambda -1-\alpha_2}\,dx$$
is bounded. 
This term is less than 
$$|w|^\sigma \int_{\real^n} |x-w|^{-\alpha_1} 
\left[ 1+ |x-w|^2 + |x|^2\right]^{-(\lambda +\alpha_2 - n)/2-1}\,dx$$
which is bounded since the sum of the ``powers of $|x|$'' is less than $2n$ (by hypothesis) 
$$\alpha_1 + \alpha_2 +\lambda -n+2 < 2n$$
so that 
\begin{gather*}
|w|^\sigma \int_{\real^n} |x-w|^{-\alpha_1} 
\left[ 1+  |x-w|^2 + |x|^2\right]^{(-\lambda +\alpha_2 - n)/2-1}\, dx\\
\le |w|^\sigma \int_{\real^n} |x-w|^{-\gamma_1} |x|^{-\gamma_2}\,dx = C
\end{gather*}
where $0<\gamma_1,\gamma_2 <n$ and $\gamma_1 + \gamma_2 = \alpha_1 +\alpha_2
+ \lambda -n+2$.
\medskip

\noindent {\em Step 2}: 
Assume that $\alpha_2 \ge n-1$ (thus insuring $\lambda + \alpha_2 >n-1$), 
$\alpha_1 +\lambda > n-1$,  and $2n-1 >\alpha_1 + \alpha_2$;  
 choose $\gamma$ so that 
$$\min \Big\{ n-\alpha_1, \alpha_1 +\alpha_2 +\lambda - 2(n-1),1\Big\}
> 2\gamma > \max \Big\{ 1-\alpha_2, \alpha_2 - (n-1)\Big\} \ge 0\ ;$$
apply the estimate 
$$\big( 1-\beta (x,w)u\big)^{-\alpha_2/2} 
\le \big( 1-\beta (x,w)\big)^{-\gamma} (1-u)^{-(\alpha_2 - 2\gamma)/2}$$
and the integral in $u$ becomes 
\begin{equation*}
\begin{split} 
&\int_0^1 u^{(\lambda +\alpha_2 - n+1)/2 -1} 
(1-u)^{(n-1-\alpha_2 + 2\gamma)/2-1}\,du \\
\noalign{\vskip6pt}
&\qquad = \frac{\Gamma((\lambda +\alpha_2 -n+1)/2) \Gamma ((n-1+2\gamma-\alpha_2)/2)}
{\Gamma ((\lambda +2\gamma)/2)}\ .
\end{split}
\end{equation*}
For $\Delta_n (w)$ to be bounded, the form 
$$|w|^\sigma \int_{\real^n} |x|^{\lambda -n+\alpha_2} |x-w|^{-\alpha_1} 
\left[ 1+ |x-w|^2 + |x|^2\right]^{n-\lambda -1 -\alpha_2 +2\gamma} 
\left| 1+ |x-w|^2 - |x|^2\right|^{-2\gamma}\, dx$$
must be bounded, and this term is less than 
$$|w|^\sigma \int_{\real^n} |x-w|^{-\alpha_1} 
\left[ 1+ |x-w|^2 + |x|^2\right]^{-(\lambda + \alpha_2 -n)/2 + 2\gamma-1} 
\left| 1+ |x-w|^2 - |x|^2\right|^{-2\gamma}\,dx\ .$$
\renewcommand{\qed}{}
\end{proof}
\medskip

Choose the $x_1$ direction to be along that for $w_1$ and rearrange in the 
variable $x_1$ using for $x= (x_1,x') \in \real^n$
$$\int_{\real^n} f(x_1,x') g(x_1,x') h(x_1,x')\,dx_1\,dx'
\le \int_{\real^n} f_{\#} (x_1,x') g_{\#} (x_1,x') h_{\#} (x_1,x')\,dx_1 \, dx'$$
where $f_{\#} (x_1,x')$ is the equimeasurable decreasing rearrangement of 
$|f(x_1,x')|$ in the variable $x_1 \in \real$. 
After applying this rearrangement argument and making a change of variables 
to remove the dependence on $|w|$, the integral above is less than
\begin{equation*}
\begin{split} 
& |w|^{\sigma- 2\gamma} \int_{\real^n} |x|^{-\alpha_1} 
\Big[ 1+ 2|x|^2 + |w|^2/2\Big]^{-(\lambda +\alpha_2-n)/2\, + 2\gamma-1} 
|x_1|^{-2\gamma} \,dx\\
\noalign{\vskip6pt}
&\qquad  < \int_{\real^n} |x|^{-\alpha_1} \Big[ 2|x|^2 + 1/2\Big]^{-(\lambda + \alpha_2-n)/2\, +
2\gamma-1} |x_1|^{-2\gamma}\,dx\\
\noalign{\vskip6pt}
&\qquad  = \int_{\real^n} \Big(1+|x_1|^2\Big)^{-\alpha_1/2} |x_1|^{-2\gamma} 
|x'|^{1-\alpha_1-2\gamma} 
\Big[ 2|x'|^2 \Big( 1+|x_1|^2\Big) +1/2\Big]^{-(\lambda +\alpha_2 -n)/2\, + 2\gamma_1}\, dx\\
\noalign{\vskip6pt}
&\qquad = \int_{\real} \Big( 1+|x_1|^2\Big)^{-(n-2\gamma)/2} |x_1|^{-2\gamma}\,dx_1 
\int_{\real^{n-1}} |x'|^{1-\alpha_1-2\gamma} 
\Big[ 2|x'|^2 + 1/2\Big]^{-(\lambda + \alpha_2-n)/2\, + 2\gamma-1}\,dx'
\end{split}
\end{equation*}
For the given assumption on $2\gamma$, these integrals are bounded thus insuring 
that $\Delta_n(w)$ is uniformly bounded and completes the proof of 
Theorem~\ref{thm:delta-function}.\qed

Taken together Theorems~\ref{thm:alpha} and \ref{thm:delta-function} include both a 
natural extension and an independent proof of the Klainerman-Machedon 
three-dimensional estimate: 
\begin{gather}
|w|^2 \int_{\real^n\times\cdots\times\real^n} 
\delta \Big[ \tau + \mathop{{\sum}'} |x_k|^2 - |x_n|^2\Big] 
\delta \Big(w-\sum x_k\Big) {\Prod} |x_k|^{-(n-1)} 
dx_1\cdots dx_n \label{eq:natexten1}\\
\noalign{\vskip6pt}
|w|^{n-1} \int_{\real^n\times\real^n\times\real^n} 
\delta \left[ \tau + |z|^2 + |x|^2 - |y|^2\right] 
\delta (w-x-y-z) 
\big[ |z|\, |x|\, |y|\big]^{-(n-1)} dx\,dy\,dz \label{eq:natexten2}
\end{gather}
are uniformly bounded in terms of the variable $\tau>0$ and $w\in \real^n$ with $n>1$.
\bigskip

These results form the basis for stating the principal multilinear embedding theorem. 
But it seems better to formulate this result in less than full generality since roughly 
the average value of $\alpha_k$ should be on the order of $n$.
Hence the embedding result will be stated for the case of uniform potentials 
$\alpha_k = n-1$ which satisfies all the required conditions for 
Theorems~\ref{thm:alpha} and \ref{thm:delta-function}.

\begin{thm}\label{thm:satisfies}
Let $\alpha_k = n-1$  for $k=1$ to $m$, $3\le m\le n+1$, $n\ge 2$, 
$\sigma = 2+n-m$, $r\ge 1$, $1<p<\infty$, $1/p + 1/q =1$, $rq \ge 2$ and 
$1/p_* + 1/(rq) =1$; then 
\begin{gather} 
\bigg[ \int_{\real^n\times\real_+} \bigg[ \int |x_1 + \cdots  + x_m|^{\sigma/p} 
|\hat f|^r \,d\nu\bigg]^q \,dw\,d\tau\bigg]^{p_*/(rq) } \notag\\
\noalign{\vskip6pt}
\le C\, \Lambda_{p_*} \Big( f;\, \{\alpha\}/(pr)\Big) 
\label{eq:satisfies}
\end{gather} 
\end{thm}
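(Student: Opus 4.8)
The plan is to recognize \eqref{eq:satisfies} as the specialization of the general multilinear embedding \eqref{eq:multilinear-embed} to uniform potentials, and then supply the one missing ingredient, namely the uniform boundedness of the surface integral \eqref{eq3}. With $\alpha_k=n-1$ one has $\alpha=\sum_k\alpha_k=m(n-1)$, so
$$\rho=2+\alpha-(m-1)n=2+m(n-1)-(m-1)n=2+n-m=\sigma,$$
and since the hypothesis $3\le m\le n+1$ is exactly $0<\rho<n$, the scaling invariance that produced \eqref{eq:function} applies: \eqref{eq3} equals the one‑variable quantity in \eqref{eq:function}. By the chain of estimates in \eqref{eq:A} --- H\"older in $d\nu$, passing to the supremum, the identity $\int\big(\int|\hat f|^{rq}\,d\nu\big)\,dw\,d\tau=\int|\hat f|^{rq}\,dx$, and Hausdorff--Young $\|\hat f\|_{rq}\le\|f\|_{p_*}$ (legitimate since $rq\ge2$ and $1/p_*+1/(rq)=1$) --- followed by the substitution linking \eqref{eq:A} to \eqref{eq:multilinear-embed}, it therefore suffices to show that
$$\Theta(w)=|w|^{2+n-m}\int\delta\Big(1+\sumprime|x_k|^2-|x_m|^2\Big)\delta\Big(w-\sum x_k\Big)\Prod|x_k|^{-(n-1)}\,dx_1\cdots dx_m$$
is bounded uniformly in $w\in\real^n$; note $0<\sigma=2+n-m<n$ throughout this range.

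For $m=3$ this $\Theta(w)$ is precisely $\Delta_n(w)$ of Theorem~\ref{thm:delta-function} with $\alpha_1=\alpha_2=\lambda=n-1$ and $\sigma=n-1$, and all hypotheses of that theorem hold for every $n\ge2$: they read $3n-2>3(n-1)>2n-2$, $\ 2(n-1)>n-1$, and $2n-1>2(n-1)$. For $m\ge4$ the constraint $3\le m\le n+1$ forces $n\ge m-1\ge3$, so Theorem~\ref{thm:alpha} is in force, and I would apply it with $\ell=3$: the three potentials $\alpha_{m-2}=\alpha_{m-1}=\alpha_m=n-1$ give $\beta_{3,m}=3(n-1)$, which lies strictly inside the admissible window $\big(2n-2,\,3n-2\big)=\big((\ell-1)n-2,\,\ell n-2\big)$ whenever $n\ge2$. (The choice $\ell=2$ is not available: $\beta_{2,m}=2(n-1)$ coincides with the upper endpoint $2n-2$ of the window for $\ell=2$ --- the borderline degeneracy visible in Theorem~\ref{thm-uniform-bd}.) Theorem~\ref{thm:alpha} then bounds $\Theta(w)$ by a constant $C$ times the reduced three‑fold supremum carrying the new exponent $2+\beta_{3,m}-2n=n-1$, which is again $\Delta_n$ with $\alpha_1=\alpha_2=\lambda=n-1$ and is thus bounded by Theorem~\ref{thm:delta-function} exactly as in the case $m=3$.

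It remains to verify that the constant produced by Theorem~\ref{thm:alpha},
$$C=|w|^{\rho}\int\Prod\limits_{1}^{m-3}|x_k|^{-(n-1)}\,\Big|w-\sum\limits_{1}^{m-3}x_k\Big|^{-(n-1)}\,dx_1\cdots dx_{m-3},$$
is finite and independent of $w$. This is the value at $w$ of the $(m-2)$‑fold convolution of $|x|^{-(n-1)}$ on $\real^n$; iterating the Riesz composition rule $|x|^{-a}*|x|^{-b}=c_{a,b}|x|^{-(a+b-n)}$ yields the intermediate potentials $|x|^{-(n-j)}$, $j=2,\dots,m-2$, each a genuine locally integrable (tempered) Riesz potential precisely because $0<n-j<n$, i.e.\ because $m\le n+1$. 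The end result is $c\,|x|^{-(n-m+2)}$, so $C=c\,|w|^{\rho-(n-m+2)}=c\,|w|^{0}=c$, a finite absolute constant. Substituting these bounds into \eqref{eq:A} gives \eqref{eq:multilinear-embed}, hence \eqref{eq:satisfies}. The work here is essentially bookkeeping, since all the genuine analytic effort sits in Theorems~\ref{thm:alpha} and~\ref{thm:delta-function}; the one delicate point --- and the reason the statement is confined to $3\le m\le n+1$ (with $n\ge3$ forced once $m\ge4$) --- is that the single value $\sigma=2+n-m$ must simultaneously satisfy the invariance requirement $\rho=\sigma$, land inside the admissible window of the three‑convolution estimate, and keep every link of the Riesz composition chain nondegenerate.
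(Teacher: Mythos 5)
Your proposal is correct and follows the same route as the paper: the paper's proof is literally "apply Theorems~\ref{thm:alpha} and \ref{thm:delta-function} in the Outline of Argument," and you carry out exactly that reduction, merely filling in the bookkeeping (the identity $\rho=\sigma=2+n-m$, the direct use of Theorem~\ref{thm:delta-function} for $m=3$, the choice $\ell=3$ in Theorem~\ref{thm:alpha} for $m\ge4$, and the Riesz composition showing the reduction constant is finite precisely for $m\le n+1$). Your verifications of the parameter windows are accurate, so nothing further is needed.
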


\begin{proof}
Apply Theorems~\ref{thm:alpha} and \ref{thm:delta-function} in the initial ``Outline of 
Argument.'' 
\renewcommand{\qed}{}
\end{proof}
\medskip

Letting indices and parameters be given as above in Theorem~\ref{thm:satisfies}, 
then duality gives for $g\in L^p (\real^n\times\real_+)$, $1<p<\infty$:

\begin{cor}
For $D = \{ x_k \in \real^n,\, k=1\text{ to } m : |x_m|^2 \ge \sumprime |x_k|^2\}$
\begin{gather}
\bigg[ \int_D \Big| g\Big( \sum x_k , |x_m|^2 - \sumprime |x_k|^2\Big) \Big|^p 
\Big| \sum x_k\Big|^\sigma \Prod |x_k|^{-(n-1)} \,dx_1 \ldots dx_m\bigg]^{1/p}
\notag\\
\le C\, \bigg[ \int_{\real^n\times\real_+} |g(w,\tau)|^p \,dw\,d\tau\bigg]^{1/p}
\label{eq:cor}
\end{gather}
\end{cor}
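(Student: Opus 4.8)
The plan is to obtain the corollary as the dual (adjoint) statement of Theorem~\ref{thm:satisfies}. First I would fix the exponents: with $1/p+1/q=1$, $rq\ge 2$, and $1/p_*+1/(rq)=1$, set $r=1$ for the cleanest version so that the inner $L^r(d\nu)$ integral becomes linear in $\hat f$; then the left-hand side of \eqref{eq:satisfies} is $\big\|\,T\hat f\,\big\|_{L^q(dw\,d\tau)}^{p_*}$, where $T$ is the linear operator sending a function $F$ on $\real^{mn}$ to
\begin{equation*}
(TF)(w,\tau) = \int |x_1+\cdots+x_m|^{\sigma/p}\, F(x_1,\dots,x_m)\, d\nu_{w,\tau},
\end{equation*}
$d\nu_{w,\tau}$ being the measure on the hyperbolic surface defined in the Outline. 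Theorem~\ref{thm:satisfies} then reads $\|T\hat f\|_{L^q}\le C\,\|\,\Prod|\xi_k|^{(n-1)/p}\,\hat f\,\|_{L^{p_*}}^{\,?}$; unwinding $\Lambda_{p_*}$, this is exactly boundedness of the map $f\mapsto T\big[\Prod|\xi_k|^{-(n-1)/p}\,\widehat{(\,\cdot\,)}\big]$ — or, absorbing the Riesz factors, boundedness of $T$ from a weighted $L^{p_*}$ space into $L^q$.

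Second, I would write down the formal adjoint. Pairing $TF$ against $g\in L^{q'}(dw\,d\tau)=L^p(dw\,d\tau)$ and using the definition of $d\nu$ to carry out the two $\delta$-integrations, one finds
\begin{equation*}
\int g(w,\tau)\,(TF)(w,\tau)\,dw\,d\tau
= \int_D \Big|\textstyle\sum x_k\Big|^{\sigma/p}\, g\Big(\textstyle\sum x_k,\; |x_m|^2-\sumprime|x_k|^2\Big)\, F(x_1,\dots,x_m)\,dx_1\cdots dx_m,
\end{equation*}
where $D$ is precisely the region $\{|x_m|^2\ge\sumprime|x_k|^2\}$ appearing in the corollary (the $\delta$ in $\tau$ forces $\tau=|x_m|^2-\sumprime|x_k|^2\ge 0$). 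Hence $T^*g$ is the function on $\real^{mn}$ given by $(T^*g)(x)=|\sum x_k|^{\sigma/p}\,g(\sum x_k,|x_m|^2-\sumprime|x_k|^2)\mathbf 1_D$. By duality between $L^q\to (L^{p_*})^*$-weighted estimates, $T$ bounded as above is equivalent to $T^*$ bounded from $L^p(dw\,d\tau)$ into the appropriate weighted $L^{q'}$ space on $\real^{mn}$; matching weights, the target norm of $T^*g$ is exactly
\begin{equation*}
\Big[\int_D \big|g(\cdots)\big|^p\,\big|\textstyle\sum x_k\big|^{\sigma}\,\Prod|x_k|^{-(n-1)}\,dx\Big]^{1/p},
\end{equation*}
so that \eqref{eq:cor} drops out directly.

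Third, I would verify the bookkeeping that the weights and exponents align. The factor $\Prod|x_k|^{-(n-1)}$ in \eqref{eq:cor} is the reflection of the Riesz potentials $\Prod|\xi_k|^{-(n-1)/(pr)}$ with $r=1$, after the change of measure that turned $\hat f$ into $f$ via $\Lambda_{p_*}$ and the identity $\Prod|x_k|^{-(n-1)}dx_1\cdots dx_n = d|x_1|\cdots d|x_n|\,d\sigma$ noted before Theorem~\ref{thm:alpha}; the power of $|\sum x_k|$ is $\sigma$ rather than $\sigma/p$ because raising $(T^*g)$ to the $p$th power doubles the exponent through $p\cdot(\sigma/p)=\sigma$. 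For general $r$ the same scheme works, with the linear operator replaced by its natural analogue and the $L^p$ on the right coming from $L^{q'}$ with $q'=p$; the only subtlety is that one should state it for the linearized ($r=1$) case or note that the inner $r$-th power is harmless since $d\nu$ is a fixed positive measure.

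The main obstacle I expect is making the duality rigorous rather than formal: $T$ involves the singular surface measure $d\nu$, so I would first establish \eqref{eq:satisfies} (hence boundedness of $T$) on a dense class — $f\in\S(\real^{mn})$, equivalently $F$ Schwartz — where all the $\delta$-integrations in the displayed pairing are legitimate and $TF$ is continuous; then extend $T^*$ to all of $L^p(dw\,d\tau)$ by density and conclude \eqref{eq:cor} for general $g$ by Fatou, noting that the region $D$ and the weight $|\sum x_k|^\sigma\Prod|x_k|^{-(n-1)}$ are fixed. One should also check that the constant $C$ is the same — it is, since duality preserves operator norms — and that no integrability issue hides at $|x_k|=0$ or at $\infty$, which is exactly what the parameter constraints $\alpha_k=n-1<n$, $0<\sigma=2+n-m<n$ (forcing $3\le m\le n+1$) were arranged to guarantee through Theorems~\ref{thm:alpha} and~\ref{thm:delta-function}.
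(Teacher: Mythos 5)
Your duality scheme does not close: the exponents on the two sides of the adjoint statement do not match. If Theorem~\ref{thm:satisfies} (with $r=1$) is read as boundedness of a linear map $h\mapsto T\hat f$ from an $L^{p_*}$-type space into $L^q(dw\,d\tau)$, then its adjoint maps $L^{q'}(dw\,d\tau)=L^p(dw\,d\tau)$ into the \emph{dual} of the source, which is an $L^{p_*'}=L^{rq}$-based space --- not the weighted $L^p(\real^{mn})$ space on the left of \eqref{eq:cor}. Since $rq\ne p$ in general (they coincide only when $r=p-1$, e.g.\ $p=q=2$ for $r=1$), your ``matching weights, the target norm of $T^*g$ is exactly $[\int_D|g|^p|\sum x_k|^\sigma\Prod|x_k|^{-(n-1)}dx]^{1/p}$'' is not the adjoint estimate; you have reverse-engineered the corollary's statement rather than derived it. A second problem: the source space of the map in Theorem~\ref{thm:satisfies} is defined through $\Lambda_{p_*}$, i.e.\ through the Fourier multipliers $\Prod|\xi_k|^{\alpha_k/(pr)}$, so the genuine adjoint carries a Fourier transform and those multipliers; the weight $\Prod|x_k|^{-(n-1)}$ in \eqref{eq:cor} is \emph{not} ``the reflection of the Riesz potentials $\Prod|\xi_k|^{-(n-1)/(pr)}$'' (note even the exponents differ), and the polar-coordinate identity $\Prod|x_k|^{-(n-1)}dx=d|x_1|\cdots d|x_n|\,d\sigma$ has nothing to do with it. That weight is the one appearing in the sup-convolution form \eqref{eq:function}.

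The argument that does work --- and the one hiding inside your (correct) pairing identity --- bypasses the operator $T$ entirely and uses only the uniform convolution bound underlying Theorem~\ref{thm:satisfies}. Writing $\Phi(x)=(\sum x_k,\,|x_m|^2-\sumprime|x_k|^2)$ and inserting $1=\int\delta(w-\sum x_k)\,\delta(\tau+\sumprime|x_k|^2-|x_m|^2)\,dw\,d\tau$ on $D$, the left side of \eqref{eq:cor} raised to the $p$th power equals
\begin{equation*}
\int_{\real^n\times\real_+}|g(w,\tau)|^p\,|w|^\sigma\bigg[\int\delta\Big(\tau+\sumprime|x_k|^2-|x_m|^2\Big)\delta\Big(w-\sum x_k\Big)\Prod|x_k|^{-(n-1)}\,dx_1\ldots dx_m\bigg]dw\,d\tau\,,
\end{equation*}
and by the scale invariance that forces $\sigma=2+\alpha-(m-1)n=2+n-m$, the bracketed density times $|w|^\sigma$ is uniformly bounded in $(w,\tau)$ by \eqref{eq:natexten1}, i.e.\ by Theorems~\ref{thm:alpha}--\ref{thm:delta-function}. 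This gives \eqref{eq:cor} for every $p$ in one line; it is the transpose (``duality'') of the averaging step in \eqref{eq:A}, not the adjoint of the full embedding operator. Your density/regularization concerns in the last paragraph are reasonable but secondary once the argument is set up this way, since everything reduces to a pointwise bound on a fixed positive density.
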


Following on the more limited choices of values for the $\{\alpha_k$'s$\}$ in 
Theorem~\ref{thm:satisfies}, it's possible to give an alternative proof with 
these values, e.g. $\alpha_k = n-1$, for Theorem~\ref{thm:delta-function}  using 
dimension reduction to show 
$$\sup \Delta_n (w) \le C \sup \Delta_2 (w)\ .$$
The proof that $\Delta_2 (w)$ is bounded comes easily from the latter part of the 
argument for Theorem~\ref{thm:delta-function}. 

By recasting the potential calculation given in the Appendix for \cite{KM} from 
a spherical surface to a hyperbolic surface, there is an implicit suggestion that the 
natural object of study for convolution forms given over surfaces may correspond 
to the type:
\begin{equation}\label{eq:recast}
\sup_{w,\tau} |w|^p \int \delta \Big( \tau + \mathop{\sumprime}_{k_j}  |x_k|^2 - |x_m|^2\Big)
\delta \Big( w-\sum_{k'_j} x_k\Big) \Prod |x_k|^{-\alpha_k} 
dx_1 \ldots dx_m
\end{equation}
where $\{k_j\}$ and $\{k'_j\}$ may be different sequences. 
Relevance of specific forms will likely reflect application.

Observe from equation~\ref{eq:outline} that for $p_* =2$, then 
$$\Lambda_{p_*} \left( f; \{\alpha\}/pr\right) 
= \int_{\real^n\times \cdots \times\real^n} 
\Prod |\xi_k|^{2\alpha_k/(pr)} |\hat f(\xi)|^2 \, d\xi_1 \ldots d\xi_m$$
and since $p_* = 2= qr$, then $q\le 2$ and $p\ge 2$ with $pr >2$ for $r>1$.
In this case and using the hypothesis of Theorem~\ref{thm:satisfies}, one has 
$$\frac2{pr} \sum \alpha_k 
= \frac2{pr} m (n-1) < (m-1) n \ \text{ for }\ m=r\ ,\ n+1\ .$$
Hence Theorem~\ref{thm:satisfies} gives a multilinear embedding estimate where the 
embedding indices can fall below the critical lower bound expressed in the restriction 
results obtained in \cite{Beckner-MRL}. 
But still the standard Sobolev embedding estimates hold for iterated potentials so in 
that context it is also possible to move below the critical index for restriction. 
For $\alpha_k = \alpha_0$ for all $k$, $0<\alpha_0 <n$, and $p= 2n/(n-\alpha_0)$
$$\Big[ \|f\|_{L^p (\real^{mn})}\Big]^2 
\le c\, \Lambda_2 \{\alpha\}  < c \int_{\real^{mn}} \Big|(-\Delta/4\pi^2)^{m\alpha_0/4} f\Big|^2
\, dx$$

Implicit in the formulation of the problems treated here is the continuing development 
of new forms that characterize control by smoothness for size.
As an example, and a consequence of the principal estimate obtained here, bounds for 
new Stein-Weiss integrals with a kernel determined by restriction to a smooth submanifold 
can be shown.

Starting from the convolution form used in Theorem~\ref{thm:satisfies} 
$$\int \delta \Big( 1+\sumprime |x_k|^2 - |x_m|^2\Big) 
\delta\Big( w-\sum x_k\Big) 
\Prod |x_k|^{-(n-1)} \,dx_1 \ldots dx_m 
\le \frac{C}{|x|^\sigma}$$
for $\sigma = 2+n-m$, $3\le m\le n+1$, and $n\ge 2$, one obtains forms of Stein-Weiss type.

\begin{thm}\label{thm:tau} 
Define for $\tau >0$, and $w,v\in \real^n$
\begin{gather*} 
K_\tau (w,v) = \int_{\real^n\times\cdots\times\real^n} 
\Prod |x_k|^{-(n-1)} \Big[ \big| w-\sum x_k\big|\, \big| v-\sum x_k\big|\Big]^{-(n+m)/2\ +\ 1}
\times\\
\delta \Big[ \tau +\sumprime |x_k|^2 - |x_m|^2\Big] \, dx_1 \ldots d_m\ ,
\end{gather*} 
then for non-negative $f\in L^2 (\real^n)$
\begin{equation}\label{eq:tau} 
\int_{\real^n\times \real^n} f(w) K_\tau (w,v) f(v)\,dw\,dv 
\le c\int_{\real^n} |f|^2\,dx 
\end{equation}
\end{thm}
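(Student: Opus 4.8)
The plan is to deduce Theorem~\ref{thm:tau} directly from the pointwise kernel bound already established in Theorem~\ref{thm:satisfies} (equivalently, the displayed inequality preceding the theorem) together with a standard Stein--Weiss / Schur-type argument. First I would observe that the kernel $K_\tau(w,v)$ factors through the convolution form: writing $y = \sum x_k$ and carrying out the $x_k$-integration with the constraint $\delta[\tau + \sumprime |x_k|^2 - |x_m|^2]$, one obtains
\begin{equation*}
K_\tau(w,v) = \int_{\real^n} \big| w-y\big|^{-(n+m)/2+1}\,\big| v-y\big|^{-(n+m)/2+1}\, \Psi_\tau(y)\,dy,
\end{equation*}
where $\Psi_\tau(y) = \int \delta[\tau + \sumprime|x_k|^2 - |x_m|^2]\,\delta(y-\sum x_k)\,\Prod|x_k|^{-(n-1)}\,dx_1\ldots dx_m$. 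By the bound used in Theorem~\ref{thm:satisfies} (scaling $\tau$ out via the invariance of \eqref{eq:function}) we have $\Psi_\tau(y) \le C\,\tau^{-?}\,|y|^{-(2+n-m)}$; the homogeneity exponent and the power of $\tau$ are fixed by dimensional analysis and are uniform in $\tau>0$ after the change of variables $x_k \mapsto \sqrt{\tau}\,x_k$. This reduces the problem to bounding the quadratic form with the explicit kernel
\begin{equation*}
\widetilde K_\tau(w,v) \le C\,\tau^{-?}\int_{\real^n} |w-y|^{-(n+m)/2+1}\,|v-y|^{-(n+m)/2+1}\,|y|^{-(2+n-m)}\,dy,
\end{equation*}
which is a composition of two Riesz potentials against the weight $|y|^{-(2+n-m)}$.

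Second, I would bound $\int f(w)\widetilde K_\tau(w,v) f(v)\,dw\,dv$ by interchanging the order of integration and recognizing the inner double integral as $\big(|\cdot|^{-(n+m)/2+1} * f\big)(y)^2$ integrated against $|y|^{-(2+n-m)}\,dy$. Thus the estimate reduces to a single Stein--Weiss inequality: with $g = |\cdot|^{-(n+m)/2+1} * f = I_{\mu}f$ for the Riesz potential of order $\mu = n - (n+m)/2 + 1 = (n-m)/2 + 1$, one needs
\begin{equation*}
\int_{\real^n} |I_\mu f(y)|^2\,|y|^{-(2+n-m)}\,dy \le c\,\|f\|_{L^2}^2.
\end{equation*}
This is precisely the Stein--Weiss fractional integral inequality (Hardy--Rellich type): the gain $2\mu$ of the Riesz potential matches the weight exponent $2+n-m = 2\mu + (n - 2\mu) - (n - 2 - n + m) $... more simply, the exponent balance $2\mu + \gamma = n$ with $\gamma = 2+n-m$ holds exactly when $\mu = (n-m)/2 + 1$, so this is the \emph{diagonal} (same $L^2$ on both sides, no shift of integrability) Stein--Weiss estimate, valid provided $0 < \mu < n/2$ and the weight power $\gamma = 2+n-m$ lies in $(0,n)$ — both of which follow from the hypotheses $3 \le m \le n+1$, $n \ge 2$ (so $\sigma = 2+n-m$ satisfies $1 \le \sigma \le n-1 < n$ and $\mu > 1/2 > 0$). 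I would invoke the classical Stein--Weiss theorem (or, for the $L^2$ endpoint, a Pitt-type inequality proved via the Fourier transform and Plancherel) to close this.

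The main obstacle is the first step: justifying the factorization and the uniform-in-$\tau$ bound $\Psi_\tau(y) \le C\tau^{-?}|y|^{-\sigma}$ rigorously, since $\Psi_\tau$ is a distributional (surface) integral and the bound of Theorem~\ref{thm:satisfies} was stated as a supremum over $w,\tau$ of a combined quantity rather than as a clean pointwise estimate on the $y$-marginal. I expect to handle this by the scaling reduction already used in passing from \eqref{eq3} to \eqref{eq:function} — the substitution $x_k \mapsto \sqrt\tau\, x_k$ converts $\delta[\tau + \sumprime|x_k|^2 - |x_m|^2]$ to $\tau^{-1}\delta[1 + \sumprime|x_k|^2 - |x_m|^2]$ and $\delta(y - \sum x_k)$ to $\tau^{-n/2}\delta(\tau^{-1/2}y - \sum x_k)$, after which Theorem~\ref{thm:satisfies} applies with $\tau = 1$ and $w = \tau^{-1/2}y$, yielding the claimed homogeneity with an explicit power of $\tau$; one then checks that this power is absorbed by the corresponding scaling of $K_\tau$ so that the final constant $c$ in \eqref{eq:tau} is indeed independent of $\tau$. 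A secondary technical point is the nonnegativity hypothesis on $f$, which is what lets us replace the kernel by its upper bound without worrying about cancellation; I would note that this is used precisely at the step where $K_\tau$ is dominated by $\widetilde K_\tau$ inside the bilinear form.
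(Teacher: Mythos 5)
Your proposal is correct and follows essentially the same route as the paper, whose entire proof is the one-line instruction to combine Pitt's inequality with the uniform convolution bounds of Theorems~\ref{thm:alpha} and \ref{thm:delta-function}: your factorization through the marginal density $\Psi_\tau(y)\le C|y|^{-\sigma}$ (where the $\tau$-powers cancel exactly under $x_k\mapsto\sqrt{\tau}\,x_k$, since the net exponent $-1-n/2+m/2=-\sigma/2$ is absorbed by the homogeneity of $|y|^{-\sigma}$) and the reduction to the diagonal weighted $L^2$ estimate for $I_\mu f$ with $\mu=(n-m)/2+1$ is precisely the intended argument, written out in more detail than the paper provides. One small correction: the scale-balance condition for $\int_{\real^n}|I_\mu f|^2|y|^{-\gamma}\,dy\le c\|f\|_{L^2}^2$ is $\gamma=2\mu$ rather than $2\mu+\gamma=n$, and this does hold here since $2\mu=n-m+2=\sigma$, with $0<\mu<n/2$ and $0<\sigma<n$ guaranteed by $3\le m\le n+1$.
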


\begin{proof}
Apply Pitt's inequality and the uniform bounds obtained from 
Theorems~\ref{thm:alpha} and \ref{thm:delta-function}. 
Here $c$ is a generic constant.
\end{proof}

In contrast to regular Stein-Weiss integrals, this kernel $K_\tau$ is not homogeneous 
with respect to dilation. 
By taking $\tau=0$, the resulting kernel is homogeneous and satisfies the conditions 
given for the Stein-Weiss lemma from the Appendix in \cite{Beckner-PAMS08}. 
This allows in principle a formula for the optimal constant. 

\begin{Cor}\label{cor:1}
For the integral operator 
$$(Tf) (w) = \int_{\real^n} K_0 (w,v) f(v)\,dv$$
that maps $L^2 (\real^n)$ to $L^2(\real^n)$
$$\|Tf\|_{L^2 (\real^n)} \le A\|f\|_{L^2 (\real^n)}$$
with the optimal constant given by 
\begin{equation}\label{eq:cor1}
A = \int_{\real^n} K_0 (w,\hat e_1) |w|^{-n/2} \,dw 
\end{equation} 
where $\hat e_1$ is a unit vector in the first coordinate direction.
\end{Cor}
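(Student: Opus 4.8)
The plan is to identify the $\tau=0$ kernel $K_0$ with a member of the family of dilation- and rotation-covariant kernels handled by the Stein-Weiss lemma of the Appendix in \cite{Beckner-PAMS08}, and then to read off the optimal constant from that lemma. First I would record the structural symmetries of $K_0$. The substitution $x_k\mapsto Rx_k$, $R\in O(n)$, gives $K_0(Rw,Rv)=K_0(w,v)$; the bracket $\big[\,|w-\sum x_k|\,|v-\sum x_k|\,\big]$ is symmetric in $w$ and $v$, so $K_0(w,v)=K_0(v,w)$; and $K_0\ge 0$. The substitution $x_k\mapsto\lambda x_k$ together with $w\mapsto\lambda w$, $v\mapsto\lambda v$, on collecting the factors $\lambda^{-m(n-1)}$ from $\Prod|x_k|^{-(n-1)}$, $\lambda^{-(n+m)+2}$ from the bracket, $\lambda^{-2}$ from the delta of a quadratic form (with $\tau=0$), and $\lambda^{mn}$ from $dx_1\ldots dx_m$, yields $K_0(\lambda w,\lambda v)=\lambda^{-n}K_0(w,v)$ — exactly the homogeneity for which the associated operator is a candidate for boundedness on $L^2(\real^n)$.

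These are the hypotheses of the Stein-Weiss lemma, once one knows the normalizing integral $A$ in \eqref{eq:cor1} is finite. Finiteness of $A$ is where the quantitative work enters, and I would get it from the estimates already in hand: running the argument that proves Theorem~\ref{thm:tau} — Pitt's inequality together with the uniform bounds of Theorems~\ref{thm:alpha} and \ref{thm:delta-function} — but against the specific radial profile $|v|^{-n/2}$ rather than a general $f\in L^2$. Equivalently, the polar-integration bounds behind those theorems control $K_0(w,\hat e_1)$ pointwise by powers of $|w|$ and $|w-\hat e_1|$ whose exponents, after multiplication by the critical weight $|w|^{-n/2}$, are locally integrable near the coincidence point $w=\hat e_1$; the degree $-n$ homogeneity then fixes the decay at $0$ and at $\infty$, so that $A<\infty$.

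With the hypotheses verified, the lemma supplies the mechanism, which I would also sketch directly: since $T$ intertwines the unitary action of $\real_+\times SO(n)$ on $L^2(\real^n)$, passing to the variables $w=e^t\omega$ with $F(t,\omega)=e^{tn/2}f(e^t\omega)$ converts $T$ into a convolution in $t$ whose symbol, after the Fourier transform in $t$, is for each frequency $\xi$ an integral operator $\widehat L(\xi)$ on $L^2(S^{n-1})$ with $SO(n)$-invariant kernel. Nonnegativity gives $|\widehat L(\xi,\omega,\eta)|\le\widehat L(0,\omega,\eta)$, and Schur's test with the constant weight on the sphere shows $\|\widehat L(\xi)\|\le\|\widehat L(0)\|=\int_{S^{n-1}}\widehat L(0,\hat e_1,\eta)\,d\eta$, with equality at $\xi=0$ realized by the constant spherical function. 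Undoing the change of variables turns $\int_{S^{n-1}}\widehat L(0,\hat e_1,\eta)\,d\eta$ into $\int_{\real^n}K_0(\hat e_1,v)\,|v|^{-n/2}\,dv$, which equals $A$ by the symmetry of $K_0$; hence $\|T\|_{L^2\to L^2}=A$, attained by the generalized eigenfunction $|w|^{-n/2}$ (which lies outside $L^2$, consistent with the phrase ``in principle'').

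I expect the main obstacle to be precisely the finiteness of $A$, i.e.\ confirming that $K_0$ at $\tau=0$ really meets the regularity demanded by the Stein-Weiss lemma. The kernel is itself a constrained multilinear integral, so one must rule out that carrying out the inner convolutions manufactures a non-integrable singularity along the diagonal $w=v$ once the critical weight is attached; this is exactly where the dimension-reduction and direct-integration estimates assembled for Theorems~\ref{thm:alpha} and \ref{thm:delta-function} do the work. Everything beyond that is the standard covariance-plus-Schur computation.
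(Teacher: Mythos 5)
Your proposal is correct and follows essentially the same route as the paper: the paper's justification is precisely the observation that at $\tau=0$ the kernel becomes homogeneous of degree $-n$, symmetric, nonnegative and rotation-invariant, so the Stein--Weiss lemma from the Appendix of \cite{Beckner-PAMS08} applies and yields the constant $A=\int K_0(w,\hat e_1)|w|^{-n/2}\,dw$. Your additional scaling computation and your caution about the finiteness of $A$ (which the paper defers with the phrase ``in principle'') are consistent with, and slightly more explicit than, the paper's treatment.
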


Further extensions come from taking a non-negative integrable function on $\real_+$ 
and integrating out the surface delta function.

\begin{Cor}\label{cor:2} 
For $\varphi \ge0$ with 
\begin{equation*}
K_\varphi (w,v) 
= \!\! \int_{\real^n\times\cdots \real^n} \mkern-18mu
\Prod |x_k|^{-(n-1)}\Big[ \big| w-\sum x_k\big|\, \big| v-\sum x_k\big|\Big]^{-(n+1)/2\ +\ 1} 
\varphi \Big[ |x_m|^2 - \sumprime |x_k|^2\Big]\, dx_1 \ldots dx_m
\end{equation*} 
and $\int_0^\infty \varphi (t)\,dt =1$, then 
\begin{equation}\label{eq:cor2} 
\int_{\real^n\times\real^n} 
f(w) K_\varphi (w,v) f(v)\,dw\,dv \le c\int_{\real^n} |f|^2\,dx\ .
\end{equation} 
\end{Cor}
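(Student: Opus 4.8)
\emph{Proof proposal.} The plan is to realize $K_\varphi$ as a superposition of the kernels $K_\tau$ from Theorem~\ref{thm:tau} and then integrate the uniform bound \eqref{eq:tau} against $\varphi$. Since $\varphi$ is supported on $\real_+$ and the argument $|x_m|^2 - \sumprime|x_k|^2$ is positive exactly on the domain $D$ over which the surface sits, one has the elementary identity
\begin{equation*}
\varphi\Big[ |x_m|^2 - \sumprime |x_k|^2\Big]
= \int_0^\infty \varphi(\tau)\, \delta\Big( \tau + \sumprime |x_k|^2 - |x_m|^2\Big)\,d\tau\ .
\end{equation*}
Substituting this into the definition of $K_\varphi$ and interchanging the $\tau$-integral with the $x_1,\ldots,x_m$ integrations — legitimate by Tonelli, since $\varphi\ge 0$ and the remaining integrand is nonnegative — yields
\begin{equation*}
K_\varphi(w,v) = \int_0^\infty \varphi(\tau)\, K_\tau(w,v)\,d\tau\ .
\end{equation*}

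Next I would insert this representation into the bilinear form and again interchange orders of integration (Tonelli, using $f\ge 0$ and $K_\tau\ge 0$):
\begin{equation*}
\int_{\real^n\times\real^n} f(w)\,K_\varphi(w,v)\,f(v)\,dw\,dv
= \int_0^\infty \varphi(\tau)\bigg[ \int_{\real^n\times\real^n} f(w)\,K_\tau(w,v)\,f(v)\,dw\,dv\bigg]\,d\tau\ .
\end{equation*}
By Theorem~\ref{thm:tau} the inner bracket is bounded by $c\int_{\real^n}|f|^2\,dx$, and the constant $c$ there is independent of $\tau$: the uniform surface bounds of Theorems~\ref{thm:alpha} and \ref{thm:delta-function} were obtained precisely through the dilation invariance that reduces a general $\tau>0$ to $\tau=1$ (cf.\ the passage surrounding \eqref{eq:function} and the statements \eqref{eq:natexten1}--\eqref{eq:natexten2}), and the Pitt/Stein--Weiss step applied in Theorem~\ref{thm:tau} involves only homogeneous weights and so respects that scaling. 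Hence
\begin{equation*}
\int_{\real^n\times\real^n} f(w)\,K_\varphi(w,v)\,f(v)\,dw\,dv
\le c\int_{\real^n}|f|^2\,dx \int_0^\infty \varphi(\tau)\,d\tau
= c\int_{\real^n}|f|^2\,dx\ ,
\end{equation*}
using the normalization $\int_0^\infty\varphi=1$.

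The main obstacle is exactly the uniformity of $c$ in $\tau$: one must check that the substitution $x_k\mapsto\sqrt\tau\,x_k$, $w\mapsto\sqrt\tau\,w$, $v\mapsto\sqrt\tau\,v$ leaves \eqref{eq:tau} invariant, which amounts to balancing the powers of $\sqrt\tau$ coming from $\Prod|x_k|^{-(n-1)}$, from the two factors $[\,|w-\sum x_k|\,|v-\sum x_k|\,]$ raised to their exponent, from the delta function, and from the change of variable in the outer pairing against $f$; the choice $\sigma = 2+n-m$ is what makes these cancel, so once that is observed the argument is bookkeeping. The only remaining points are the two Tonelli interchanges, which are harmless because every quantity in sight is nonnegative, and the a.e.\ finiteness of $K_\varphi$, which is itself a byproduct of the bound just established.
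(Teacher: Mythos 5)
Your superposition argument --- writing $\varphi\big[\,|x_m|^2-\sumprime|x_k|^2\big]=\int_0^\infty\varphi(\tau)\,\delta\big(\tau+\sumprime|x_k|^2-|x_m|^2\big)\,d\tau$ so that $K_\varphi=\int_0^\infty\varphi(\tau)K_\tau\,d\tau$, then invoking the $\tau$-uniform bound of Theorem~\ref{thm:tau} and the normalization $\int_0^\infty\varphi=1$ --- is exactly the route the paper intends (it offers nothing beyond the remark about ``integrating out the surface delta function''), and your verification that the constant in Theorem~\ref{thm:tau} is uniform in $\tau$ is the one point that genuinely needs checking. Note only that for the identity $K_\varphi=\int_0^\infty\varphi(\tau)K_\tau\,d\tau$ to hold literally, the Riesz exponent in the Corollary should read $-(n+m)/2+1$ as in Theorem~\ref{thm:tau} rather than the printed $-(n+1)/2+1$; this is evidently a typographical slip, since only the former balances the homogeneity count $2\beta+\sigma=2n$ (with $\sigma=2+n-m$) required by the Pitt/Stein--Weiss step.
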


Overall these results are examples of the following general embedding estimate which 
gives ``size control'' at the $L^2$ level:
\begin{equation}\label{eq:size-ctrl} 
\int_{S\subset \real^{mn}} \Big| f\Big(\sum x_k\Big)\Big|^2 \Prod |x_k|^{-\alpha_k} \,d\nu
\le c \int_{\real^n} \big| (-\Delta/4\pi^2)^{\rho/2} f\big|^2\,dx 
\end{equation}
where $d\nu$ is surface measure (possibly weighted) on the surface $S$ contained 
in $\real^{mn}$.

Practical application for the Klainerman-Machedon method and such convolution-type 
estimates has proved to be efficient by replacing the Riesz potentials with Bessel 
potentials on the Fourier transform side (\cite{CP}, \cite{Kirk}); 
advantage is achieved by removing local singularities while gaining integrability 
on the potential side and improving the range of application as ``smoothing operators''; 
still the lack of homogeneity limits determination of precise dependence on parameters in 
computing best size estimates. 
But as with exact model calculations, the role of Riesz potentials can result in ``very 
elegant and useful formulae'' that underline intrinsic geometric structure, capture 
essential features of symmetry and uncertainty, and provide insight to precise 
lower-order effects.



\begin{thebibliography}{99}
\frenchspacing

\bibitem{Beckner-PAMS08}
W. Beckner, 
{\em Pitt's inequality with sharp convolution estimates}, 
Proc. Amer. Math. Soc. {\bf 136} (2008), 1871--1885.

\bibitem{Beckner-MRL}
W. Beckner, 
{\em Multilinear embedding estimates for the fractional Laplacian}, 
Math. Res. Lett. (in press).

\bibitem{CP}
T. Chen and N. Pavlovic, 
{\em On the Cauchy problem for focusing and defocusing
Gross-Pitaevskii hierarchies},
Discr. Contin. Dyn. Syst. {\bf 27} (2010), 715--739. 

\bibitem{F}
C. Fefferman, 
{\em Inequalities for strongly singular convolution operators},
Acta Math. {\bf 124} (1970), 9--36.

\bibitem{Kirk}
K. Kirkpatrick, B. Schlein and G. Staffilani, 
{\em Derivation of the two-dimensional nonlinear Schr\"odinger equation from 
many body quantum mechanics},
Amer. J. Math. {\bf 133} (2011), 91--130.

\bibitem{KM96}
S. Klainerman and M. Machedon,
{\em Remark on Strichartz-type inequalities}, 
Internat. Math. Res. Notices, 1996, 201--220.

\bibitem{KM}
S. Klainerman and M. Machedon,
{\em On the uniqueness of solutions to the Gross-Pitaevskii hierarchy},
Comm. Math. Phys. {\bf 279} (2008), 169--185.

\bibitem{Landkof}
N.S. Landkof, 
{\em Foundations of modern potential theory}, 
Springer-Verlag, 1972.

\bibitem{Stein70}
E.M. Stein, 
{\em Singular integrals and differentiability properties of functions},
Princeton University Press, 1970. 

\bibitem{Stein93}
E.M. Stein, 
{\em Harmonic analysis: real-variable methods, orthogonality, and oscillatory integrals},
Princeton University Press, 1993.

\bibitem{WW}
E.T. Whittaker and G.N. Watson, 
{\em A course of modern analysis}, 
Cambridge University Press, 1927.

\end{thebibliography}
\end{document}